\documentclass{amsart}
\usepackage{amscd,amssymb,enumerate,amsmath}
\usepackage[notref,notcite]{showkeys}
\usepackage[all]{xy}

\newcommand{\ppq}{\leqslant}
\newcommand{\pgq}{\geqslant}

\newcommand{\im}{\operatorname{Im}\nolimits}
\renewcommand{\ker}{\operatorname{Ker}\nolimits}
\newcommand{\car}{\operatorname{char}\nolimits}
\newcommand{\Hom}{\operatorname{Hom}\nolimits}
\newcommand{\Ext}{\operatorname{Ext}\nolimits}
\newcommand{\HH}{\operatorname{HH}\nolimits}
\newcommand{\opp}{\operatorname{op}\nolimits}

\renewcommand{\sp}{\operatorname{sp}\nolimits}

\newcommand{\ot}{\otimes}
\renewcommand{\lq}{\Lambda_{\mathbf q}}
\newcommand{\iq}{I_{\mathbf q}}
\renewcommand{\L}{\Lambda}
\newcommand{\G}{\Gamma}
\newcommand{\oa}{\bar{a}}
\newcommand{\mo}{\mathfrak{o}}
\newcommand{\mt}{\mathfrak{t}}
\newcommand{\rrad}{\mathfrak{r}}

\newcommand{\N}{\mathcal{N}}

\newcommand{\Z}{\mathbb{Z}}

\newcommand{\A}{\mathcal{A}}

\newcommand{\lift}{\mathcal{L}}

\setlength{\textwidth}{15cm} \setlength{\textheight}{21cm}

\newtheorem{theorem}{{Theorem}}[section]
\newenvironment{thm}{\begin{theorem}}{\end{theorem}}
\newcommand{\bt}{\begin{thm}}
\newcommand{\et}{\end{thm}}

\newtheorem{corollaire}[theorem]{{Corollary}}
\newenvironment{cor}{\begin{corollaire}}{\end{corollaire}}
\newcommand{\bc}{\begin{cor}}
\newcommand{\ec}{\end{cor}}

\newtheorem{lemme}[theorem]{{Lemma}}
\newenvironment{lemma}{\begin{lemme}}{\end{lemme}}
\newcommand{\bl}{\begin{lemma}}
\newcommand{\el}{\end{lemma}}

\newtheorem{proposition}[theorem]{{Proposition}}
\newcommand{\bprop}{\begin{proposition}}
\newcommand{\eprop}{\end{proposition}}

\newtheorem{definition}[theorem]{{Definition}}
\newenvironment{dfn}{\begin{definition} \rm}{\end{definition}}
\newcommand{\bd}{\begin{dfn}}
\newcommand{\ed}{\end{dfn}}
\newcommand{\dd}{\partial}

\newtheorem*{remark}{Remark}
\newcommand{\br}{\begin{remark}}
\newcommand{\er}{\end{remark}}

\setlength{\textwidth}{15cm}
\setlength{\textheight}{21cm}

\begin{document}

\topmargin 0cm
\oddsidemargin 0.5cm
\evensidemargin 0.5cm
\baselineskip=16pt

\title[A family of algebras with finite Hochschild cohomology]{A family of Koszul self-injective algebras with finite Hochschild cohomology}
\author[Parker]{Alison Parker}
\address{Alison Parker\\Department of Pure Mathematics\\
University of Leeds\\
Leeds, LS2 9JT\\
England}
\email{a.e.parker@leeds.ac.uk}
\author[Snashall]{Nicole Snashall}
\address{Nicole Snashall\\Department of Mathematics\\
University of Leicester\\
University Road\\
Leicester, LE1 7RH\\
England}
\email{njs5@mcs.le.ac.uk}

\subjclass[2010]{16D50, 16E40, 16S37}

\keywords{Hochschild cohomology, Koszul algebra, self-injective algebra.}

\begin{abstract}
This paper presents an infinite family of Koszul self-injective algebras whose Hochschild cohomology ring is finite-dimensional. Moreover, for each $N \pgq 5$ we give an example where the Hochschild cohomology ring has dimension $N$. This family of algebras includes and generalizes the 4-dimensional Koszul self-injective local algebras of \cite{BGMS}, which were used to give a negative answer to Happel's question, in that they have infinite global dimension but finite-dimensional Hochschild cohomology.
\end{abstract}

\date{\today}
\maketitle

\section*{Introduction}

Let $K$ be a field. Throughout this paper we suppose $m \pgq 1$, and let ${\mathcal Q}$ be the
quiver with $m$ vertices, labelled $0, 1, \ldots, m-1$, and $2m$
arrows as follows:
$$\xymatrix@=.01cm{
&&&&&&&&&&&\cdot\ar@/^.5pc/[rrrrrd]^{a}\ar@/^.5pc/[llllld]^{\bar{a}}\\
&&&&&&\cdot\ar@/^.5pc/[rrrrru]^{a}\ar@/^.5pc/[llldd]^{\bar{a}}&&&&&&&&&&\cdot\ar@/^.5pc/[rrrdd]^{a}\ar@/^.5pc/[lllllu]^{\bar{a}}\\\\
&&&\cdot\ar@/^.5pc/[rrruu]^{a}\ar@{.}@/_.3pc/[ldd]&&&&&&&&&&&&&&&&\cdot\ar@/^.5pc/[llluu]^{\bar{a}}\ar@{.}@/^.3pc/[rdd]\\\\
&&&&&&&&&&&&&&&&&&&&&&&\\
\\
\\
\\\\\\\\\\\\\\\\
&&&&&&&&&&&&&&&&\\
&&&&&&&&&&&\cdot\ar@/_.3pc/@{.}[rrrrru]\ar@/^.3pc/@{.}[lllllu] }$$
Let $a_i$ denote the arrow that goes from vertex $i$ to vertex
$i+1$, and let $\oa_i$ denote the arrow that goes from vertex $i+1$
to vertex $i$, for each $i=0, \ldots , m-1$ (with the obvious
conventions modulo $m$). We denote the trivial path at the vertex
$i$ by $e_i$. Paths are written from left to right.

We define $\Lambda$ to be the algebra $K{\mathcal Q}/I$
where $I$ is the ideal of $K{\mathcal Q}$ generated by
$a_ia_{i+1}$, $\oa_{i-1}\oa_{i-2}$ and $a_i\oa_i-\oa_{i-1}a_{i-1}$,
for $i=0, \ldots , m-1$, where the subscripts are taken modulo $m$.
These algebras are Koszul self-injective special biserial algebras and as
such play an important role in various aspects of the representation
theory of algebras. In particular, for $m$ even, this algebra
occurred in the presentation by quiver and relations of the Drinfeld
double of the generalized Taft algebras studied in \cite{EGST}, and in
the study of the representation theory of $U_q(\mathfrak{sl}_2)$, for which,
see \cite{CK, Patra, Suter, Xiao}.

For $m \pgq 1$ and for each ${\mathbf q} = (q_0, q_1, \ldots ,
q_{m-1}) \in (K^*)^m$, we define $\lq = K{\mathcal Q}/\iq$, where
$\iq$ is the ideal of $K{\mathcal Q}$ generated by $$a_ia_{i+1},
\ \oa_{i-1}\oa_{i-2}, \ q_ia_i\oa_i-\oa_{i-1}a_{i-1} \mbox{ for } i =
0, \ldots , m-1.$$  These algebras are socle deformations of the
algebra $\L$, with $\lq = \L$ when ${\mathbf q} = (1, 1, \ldots , 1)$,
and were studied in \cite{ST2}. We are assuming each $q_i$ is non-zero
since we wish to study self-injective algebras. Indeed, the algebra
$\lq$ is a Koszul self-injective socle deformation of $\L$, and the
$K$-dimension of $\lq$ is $4m$.

In the case $m = 1$, the algebras $\lq$ were studied in \cite{BGMS},
where they were used to answer negatively a question of Happel, in
that their Hochschild cohomology ring is finite-dimensional but they
are of infinite global dimension when $q\in K^*$ is not a root of
unity. In this paper we show, for all $m\pgq 1$, that the algebras
$\lq$, where ${\mathbf q} = (q_0, q_1, \ldots , q_{m-1}) \in (K^*)^m$,
all have finite-dimensional Hochschild cohomology ring when $q_0q_1
\cdots q_{m-1}$ is not a root of unity.
Thus, for each non-zero element of $K$ which is not a root of unity, we have generalized
the 4-dimensional algebra of \cite{BGMS} to an infinite
family of algebras which all give a negative answer to Happel's question. This also complements
the paper of Bergh and Erdmann \cite{BE} in which they extended the example of \cite{BGMS}
by producing a family of local algebras of infinite global dimension for which the
Hochschild cohomology ring is finite-dimensional. We remark that the algebras of
\cite{BE, BGMS} are local algebras with 5-dimensional Hochschild cohomology ring.
In this paper we give, for each $N \pgq 5$, a finite-dimensional algebra with $m=N-4$
simple modules and of infinite global dimension whose Hochschild cohomology ring is $N$-dimensional.

\bigskip

For a finite-dimensional $K$-algebra $\A$ with Jacobson radical
$\rrad$, the Hochschild cohomology ring of $\A$ is given by
$\HH^*(\A) = \Ext^*_{\A^e}(\A,\A) = \oplus_{n \pgq 0}\Ext^n_{\A^e}(\A,
\A)$ with the Yoneda product, where $\A^e = \A^{\opp} \otimes_K \A$ is
the enveloping algebra of $\A$. Since all tensors are over the field
$K$ we write $\otimes$ for $\otimes_K$ throughout. We denote by $\N$
the ideal of $\HH^*(\A)$ which is generated by all homogeneous
nilpotent elements. Thus $\HH^*(\A)/\N$ is a commutative $K$-algebra.

The Hochschild cohomology ring modulo nilpotence of $\lq$, where
${\mathbf q} = (q_0, q_1, \ldots , q_{m-1}) \in (K^*)^m$, was explicitly
determined in \cite{ST2}, where it was shown that $\HH^*(\lq)/\N$ is a
commutative finitely generated $K$-algebra of Krull dimension 2 when
$q_0\cdots q_{m-1}$ is a root of unity, and is $K$ otherwise. Note that, by setting
${\mathbf z} = (q_0q_1\cdots q_{m-1}, 1, \ldots , 1)$, we have an isomorphism $\lq
\cong \L_{\mathbf z}$ induced by $a_i \mapsto q_0q_1\cdots q_ia_i,
\oa_i \mapsto \oa_i$. However, for ease of notation, we will consider
the algebra in the form $\lq = K{\mathcal Q}/\iq$ with ${\mathbf q} =
(q_0, q_1, \ldots , q_{m-1}) \in (K^*)^m$.
It was shown by Erdmann and Solberg in \cite[Proposition 2.1]{ES} that,
if $q_0q_1\cdots q_{m-1}$ is a root of unity, then the finite generation
condition {\bf (Fg)} holds, so that $\HH^*(\lq)$ is a finitely generated
noetherian $K$-algebra. (See \cite{EHSST, ES, SS} for more details on the
finite generation condition {\bf (Fg)} and the rich theory of support
varieties for modules over algebras which satisfy this condition.)

\bigskip

The aim of this paper is to determine $\HH^*(\lq)$ for each $m \pgq 1$ in the case where
$q_0q_1\cdots q_{m-1}$ is not a root of unity, and in particular to
show that this ring is finite-dimensional. Thus we set
$\zeta = q_0q_1\cdots q_{m-1} \in K^*$ and assume
that $\zeta$ is not a root of unity.

\bigskip

\section{The projective resolution of $\lq$}

\bigskip

A minimal projective bimodule resolution for $\Lambda$ was given in
\cite[Theorem 1.2]{ST}. Since $\lq$ is a Koszul algebra, we again use
the approach of \cite{GHMS} and \cite{GSZ} and modify the resolution
for $\Lambda$ from \cite{ST} to give a minimal projective bimodule
resolution $(P^*, \partial^*)$ for $\lq$.

We recall from \cite{H}, that the multiplicity of $\lq e_i\otimes
e_j\lq$ as a direct summand of $P^n$ is
equal to the dimension of $\Ext^n_{\lq}(S_i, S_j)$, where $S_i, S_j$
are the simple right $\lq$-modules corresponding to the vertices $i,
j$ respectively. Thus the projective bimodules $P^n$ are the same as
those in the minimal projective bimodule resolution for $\Lambda$, and
we have, for $n \pgq 0$, that
$$P^n = \oplus_{i=0}^{m-1} [\oplus_{r=0}^n \lq e_i\otimes e_{i+n-2r}\lq].$$

Write $\mo(\alpha)$ for the trivial path corresponding to the origin
of the arrow $\alpha$, so that $\mo(a_i) = e_i$ and $\mo(\oa_i) =
e_{i+1}$. We write $\mt(\alpha)$ for the trivial path corresponding
to the terminus of the arrow $\alpha$, so that $\mt(a_i) = e_{i+1}$
and $\mt(\oa_i) = e_i$. Recall that a non-zero element $r \in
K{\mathcal Q}$ is said to be uniform if there are vertices $v, w$
such that $r = vr = rw$. We then write $v = \mo(r)$ and $w = \mt(r)$.

In \cite{GSZ}, the authors give an explicit inductive construction of
a minimal projective resolution of $\A/\rrad$ as a right $\A$-module, for a
finite-dimensional $K$-algebra $\A$. For
$\A = K{\G}/I$ and finite-dimensional, they define
$g^0$ to be the set of vertices of ${\G}$, $g^1$ to be the
set of arrows of ${\G}$, and $g^2$ to be a minimal set of
uniform relations in the generating set of $I$, and then show that
there are subsets $g^n, n\pgq 3$, of $K{\G}$, where $x\in
g^n$ are uniform elements satisfying $x=\sum_{y\in g^{n-1}}yr_y
=\sum_{z\in g^{n-2}}zs_z$ for unique $r_y,s_z\in K{\G}$,
which can be chosen in such a way that there is a minimal projective
$\A$-resolution of the form
$$\cdots \to Q^4\to Q^3 \rightarrow Q^2 \rightarrow Q^1 \rightarrow Q^0
\rightarrow \A/\rrad \rightarrow 0$$ having the following
properties:
\begin{itemize}
\item[(1)] for each $n\pgq 0$,
$Q^n = \coprod_{x \in g^n} \mathfrak{t}(x)\A$,
\item[(2)] for each $x \in g^n$, there are unique elements $r_j \in
K{\G}$ with $x = \sum_{j}g^{n-1}_jr_j$,
\item[(3)] for each $n\pgq 1$,
using the decomposition of (2), for $x\in g^n$, the map $Q^n
\rightarrow Q^{n-1}$ is given by
$$\mathfrak{t}(x)a \mapsto
\sum_{j}r_j\mathfrak{t}(x)a \mbox{\ \ for all $a \in \A$},$$
\end{itemize}
where the elements of the set $g^n$ are labelled by $g^n =
\{g^n_j\}$. Thus the maps in this minimal projective resolution of
$\A/\rrad$ as a right $\A$-module are described by the
elements $r_j$ which are uniquely determined by (2).

\bigskip

For our algebra $\lq$, we now define sets $g^n$ in the path algebra
$K{\mathcal Q}$ which we will use to label the generators of $P^n$.

\bd
For the algebra $\lq$, $i =0, 1, \ldots , m-1$ and $r = 0, 1, \ldots , n$, define
$$g_{0,i}^0 = e_i$$
and, inductively for $n \pgq 1$,
$$g^n_{r,i} = g^{n-1}_{r,i}a_{i+n-2r-1} +
(-1)^nq_{i-r+1}q_{i-r+2}\cdots q_{i+n-2r}g^{n-1}_{r-1,i}\bar{a}_{i+n-2r}$$
with the conventions that $g^{n-1}_{-1,i} = 0$ and $g^{n-1}_{n,i} = 0$ for all
$n, i$, and that $q_{i-r+1}q_{i-r+2}\cdots q_{i+n-2r} = 1$ if $r=n$.

Define $g^n = \bigcup_{i=0}^{m-1}\{g^n_{r,i} \mid r = 0, \ldots , n\}.$
\ed

It is easy to see, for $n = 1$, that $g_{0,i}^1 = a_i$ and $g_{1,i}^1 =
-\bar{a}_{i-1}$, whilst, for $n=2$, we have $g_{0,i}^2 = a_ia_{i+1}$, $g_{1,i}^2 =
q_ia_i\bar{a}_i - \bar{a}_{i-1}a_{i-1}$ and $g_{2,i}^2 =
-\bar{a}_{i-1}\bar{a}_{i-2}$. Thus
$$\begin{array}{lll}
g^0 & = & \{e_i \mid i = 0, \ldots , m-1\},\\
g^1 & = & \{a_i, -\oa_i \mid i = 0, \ldots , m-1\},\\
g^2 & = & \{a_ia_{i+1},\ \ q_ia_i\oa_i-\oa_{i-1}a_{i-1},\ \ -\oa_{i-1}\oa_{i-2} \mbox{ for all $i$}\},\\
\end{array}$$
so that $g^2$ is a minimal set of uniform relations in the generating set of $\iq$.

Moreover, $g_{r,i}^n \in e_i(K{\mathcal Q})e_{i+n-2r}$, for
$i=0,\ldots, m-1$ and $r=0, \ldots, n$. Since the elements
$g^n_{r,i}$ are uniform elements, we may define
$\mathfrak{o}(g^n_{r,i}) = e_i$ and $\mathfrak{t}(g^n_{r,i}) =
e_{i+n-2r}$. Then $$P^n =
\oplus_{i=0}^{m-1}[\oplus_{r=0}^{n}\lq\mathfrak{o}(g^n_{r,i})
\otimes \mathfrak{t}(g^n_{r,i})\lq].$$

\bigskip

To describe the map $\partial^n\colon P^n\to P^{n-1}$, we need the
following lemma and some notation.

\bl\label{lemma:maps}
For the algebra $\lq$, for $n \pgq 1$, $i =0, 1, \ldots , m-1$ and $r
= 0, 1, \ldots , n$, we have:
$$\begin{array}{ll}
g^n_{r,i}
& = g^{n-1}_{r,i}a_{i+n-2r-1} +
(-1)^n\underbrace{q_{i-r+1}q_{i-r+2}\cdots q_{i+n-2r}}_{\mbox{$n-r$
    terms}} g^{n-1}_{r-1,i}\bar{a}_{i+n-2r} \\
\phantom{a} \\
& = (-1)^r\underbrace{q_{i-r+1}q_{i-r+2}\cdots q_i}_{\mbox{$r$ terms}}
a_ig^{n-1}_{r,i+1} +
(-1)^r\bar{a}_{i-1}g^{n-1}_{r-1,i-1}\end{array}$$
with the
conventions that $g^n_{-1,i} = 0$ and $g^{n-1}_{n,i} = 0$ for all
$n, i$, and that $q_{i-r+1}q_{i-r+2}\cdots q_{i+n-2r} = 1$ if $r=n$ and
$q_{i-r+1}q_{i-r+2}\cdots q_i = 1$ if $r=0$.
Thus
$$g^n_{0,i} = g^{n-1}_{0,i}a_{i+n-1} = a_ig^{n-1}_{0,i+1} \mbox{ and }
g^n_{n,i} = (-1)^ng^{n-1}_{n-1,i}\bar{a}_{i-n} = (-1)^n\bar{a}_{i-1}g^{n-1}_{n-1,i-1}.$$
\el

\begin{proof}
The first formula is of course the definition of $g_{r,i}^n$ so we
need to prove the second equality.
We prove this by induction on $n$. We note that, with the above
conventions, the second formula is correct for $n = 1$ and $2$.

Suppose the second formula is true for $n$ and $n-1$; we consider
the case with $n+1$ and look at the difference:
\begin{align*}
& g^{n}_{r,i}a_{i+n-2r} +
(-1)^{n+1}q_{i-r+1}q_{i-r+2}\cdots q_{i+n-2r+1} g^{n}_{r-1,i}\bar{a}_{i+n-2r+1} \\
 &\quad -  (-1)^rq_{i-r+1}q_{i-r+2}\cdots q_ia_ig^{n}_{r,i+1}
- (-1)^r\bar{a}_{i-1}g^{n}_{r-1,i-1}\\
& =
 (-1)^rq_{i-r+1}q_{i-r+2}\cdots q_ia_ig^{n-1}_{r,i+1} a_{i+n-2r}
+ (-1)^r\bar{a}_{i-1}g^{n-1}_{r-1,i-1} a_{i+n-2r} \\
&\quad + (-1)^{n+1}q_{i-r+1}q_{i-r+2}\cdots q_{i+n-2r+1} (-1)^{r-1}q_{i-r+2}q_{i-r+3}\cdots q_ia_ig^{n-1}_{r-1,i+1} \bar{a}_{i+n-2r+1} \\
&\quad + (-1)^{n+1}q_{i-r+1}q_{i-r+2}\cdots q_{i+n-2r+1} (-1)^{r-1}\bar{a}_{i-1}g^{n-1}_{r-2,i-1} \bar{a}_{i+n-2r+1} \\
 &\quad  -  (-1)^rq_{i-r+1}q_{i-r+2}\cdots q_ia_i
g^{n-1}_{r,i+1}a_{i+n-2r} \\
 &\quad  -  (-1)^rq_{i-r+1}q_{i-r+2}\cdots q_ia_i
(-1)^nq_{i-r+2}q_{i-r+3}\cdots
q_{i+n-2r+1}g^{n-1}_{r-1,i+1}\bar{a}_{i+n-2r+1}
\\
&\quad - (-1)^r\bar{a}_{i-1}
g^{n-1}_{r-1,i-1}a_{i+n-2r}
- (-1)^r\bar{a}_{i-1}
(-1)^nq_{i-r+1}q_{i-r+2}\cdots
q_{i+n-2r+1}g^{n-1}_{r-2,i-1}\bar{a}_{i+n-2r+1} \\
& = 0
\end{align*}
as required.
\end{proof}

In order to define $\partial^n$ for $n \pgq 1$ in a minimal
projective bimodule resolution $(P^*, \partial^*)$ of $\lq$, we
use the following notation. In describing the image of
$\mathfrak{o}(g^n_{r,i}) \otimes \mathfrak{t}(g^n_{r,i})$ under
$\partial^n$ in the projective module $P^{n-1}$, we use subscripts
under $\otimes$ to indicate the appropriate summands of the
projective module $P^{n-1}$. Specifically, let $\otimes_r$ denote a
term in the summand of $P^{n-1}$ corresponding to $g^{n-1}_{r,-}$,
and $\otimes_{r-1}$ denote a term in the summand of $P^{n-1}$
corresponding to $g^{n-1}_{r-1,-}$, where the appropriate index $-$
of the vertex may always be uniquely determined from the context.
Indeed, since the relations are uniform along the quiver, we can
also take labelling elements defined by a formula independent of $i$,
and hence we omit the index $i$ when it is clear from the context.
Recall that nonetheless all tensors are over $K$.

The algebra $\lq$ is Koszul, so we now use \cite{GHMS} to give a minimal
projective bimodule resolution $(P^*, \partial^*)$ of $\lq$. We
define the map $\partial^0\colon P^0\to \lq$ to be the
multiplication map. For $n\pgq 1$, we define the map
$\partial^n: P^n\to P^{n-1}$ as follows:
$$\begin{array}{ll}
\partial^n \colon\mathfrak{o}(g^n_{r,i}) \otimes \mathfrak{t}(g^n_{r,i})
\mapsto &
(e_i\otimes_ra_{i+n-2r-1} +
(-1)^n\underbrace{q_{i-r+1}q_{i-r+2}\cdots q_{i+n-2r}}_{\mbox{$n-r$
    terms}} e_i\otimes_{r-1}\bar{a}_{i+n-2r})\\
\phantom{a}\\
& +(-1)^n((-1)^r\underbrace{q_{i-r+1}q_{i-r+2}\cdots q_i}_{\mbox{$r$
    terms}} a_i\otimes_re_{i+n-2r} +
(-1)^r\bar{a}_{i-1}\otimes_{r-1}e_{i+n-2r}).
\end{array}$$
Using our conventions, the degenerate cases $r = 0$ and $r = n$ simplify to
$$\partial^n\colon \mathfrak{o}(g^n_{0,i}) \otimes \mathfrak{t}(g^n_{0,i})
\mapsto e_i\otimes_0a_{i+n-1} + (-1)^na_i\otimes_0e_{i+n}$$ where
the first term is in the summand corresponding to $g^{n-1}_{0,i}$
and the second term is in the summand corresponding to
$g^{n-1}_{0,i+1}$, whilst
$$\partial^n\colon \mathfrak{o}(g^n_{n,i}) \otimes \mathfrak{t}(g^n_{n,i})
\mapsto (-1)^ne_i\otimes_{n-1}\bar{a}_{i-n} +
\bar{a}_{i-1}\otimes_{n-1}e_{i-n},$$ with the first term in the
summand corresponding to $g^{n-1}_{n-1,i}$ and the second term in
the summand corresponding to $g^{n-1}_{n-1,i-1}$.

We claim that the map $\partial^n$ does indeed make $(P^*, \partial^*)$ into a complex.

\bl\label{lem:complex}
We have $\partial^{n}\circ \partial^{n+1} = 0$.
\el

\begin{proof}
The proof is a matter of applying the two different recursive formulae
for $g^{n}_{r,i}$. It is not difficult, but care is needed with all
the terms. We have

\noindent\begin{align*}
&\dd^n \circ \dd^{n+1}
(\mathfrak{o}(g^{n+1}_{r,i}) \otimes
  \mathfrak{t}(g^{n+1}_{r,i}))\\
& =
\dd^n\bigl(
(e_i\otimes_ra_{i+n-2r} +
(-1)^{n+1}q_{i-r+1}q_{i-r+2}\cdots q_{i+n-2r+1}e_i\otimes_{r-1}\bar{a}_{i+n-2r+1})\\
&\quad  +(-1)^{n+1}((-1)^rq_{i-r+1}q_{i-r+2+1}\cdots q_ia_i\otimes_re_{i+n-2r+1} +
(-1)^r\bar{a}_{i-1}\otimes_{r-1}e_{i+n-2r+1}) \bigr)\\
& =
\dd^n\bigl(
(\mathfrak{o}(g^{n}_{r,i})
\otimes_r\mathfrak{t}(g^{n}_{r,i}) a_{i+n-2r} +
(-1)^{n+1}q_{i-r+1}q_{i-r+2}\cdots
q_{i+n-2r+1}\mathfrak{o}(g^{n}_{r-1,i})\otimes_{r-1}
\mathfrak{t}(g^{n}_{r-1,i}) \bar{a}_{i+n-2r+1})\\
&\quad  -(-1)^{n}((-1)^rq_{i-r+1}q_{i-r+2+1}\cdots
q_ia_i\mathfrak{o}(g^{n}_{r,i+1}) \otimes_r\mathfrak{t}(g^{n}_{r,i+1}) +
(-1)^r\bar{a}_{i-1}\mathfrak{o}(g^{n}_{r-1,i-1}) \otimes_{r-1}
\mathfrak{t}(g^{n}_{r-1,i-1}))  \bigr)\\
& =
e_i \otimes_r a_{i+n-2r-1}a_{i+n-2r} +
(-1)^n q_{i-r+1}\cdots q_{i+n-2r} e_i \otimes_{r-1}
\bar{a}_{i+n-2r}a_{i+n-2r}\\
& \quad +(-1)^{n+r}q_{i-r+1}\cdots q_{i} a_i \otimes_{r} a_{i+n-2r}
+(-1)^{n+r} \bar{a}_{i+n-2r} \otimes_{r-1} a_{i+n-2r}\\
&\quad + (-1)^{n+1} q_{i-r+1}\cdots q_{i+n-2r+1} e_i \otimes_{r-1}
a_{i+n-2r+1}\bar{a}_{i+n-2r+1}\\
&\quad + (-1)^{2n+1} q_{i-r+1}\cdots q_{i+n-2r+1} q_{i-r+2}\cdots q_{i+n-2r+2}
e_i \otimes_{r-2} \bar{a}_{i+n-2r+2}\bar{a}_{i+n-2r+1}\\
&\quad + (-1)^{2n+r} q_{i-r+1}\cdots q_{i+n-2r+1} q_{i-r+2}\cdots q_{i}
a_i \otimes_{r-1} \bar{a}_{i+n-2r+1}\\
&\quad + (-1)^{2n+r} q_{i-r+1}\cdots q_{i+n-2r+1} \bar{a}_{i-1} \otimes_{r-2} \bar{a}_{i+n-2r+1}\\
&\quad - (-1)^{n+r} q_{i-r+1}\cdots q_{i} a_{i} \otimes_{r} a_{i+n-2r}
- (-1)^{2n+r} q_{i-r+1}\cdots q_{i}q_{i-r+2}\cdots q_{i+n-2r+1} a_{i}
\otimes_{r-1} \bar{a}_{i+n-2r+1}\\
&\quad - (-1)^{2n+r} q_{i-r+1}\cdots q_{i}q_{i-r+2}\cdots q_{i+n-2r+1}
a_{i} a_{i+1} \otimes_{r} e_{i+n-2r+1}\\
&\quad - (-1)^{2n+2r} q_{i-r+1}\cdots q_{i} a_i \bar{a}_{i} \otimes_{r-1} e_{i+n-2r+1}\\
&\quad - (-1)^{n+r}   \bar{a}_{i-1} \otimes_{r-1} a_{i+n-2r}
 - (-1)^{2n+r} q_{i-r+1}\cdots q_{i+n-2r+1} \bar{a}_{i-1}
 \otimes_{r-2} \bar{a}_{i+n-2r+1}\\
&\quad - (-1)^{2n+2r-1}  q_{i-r+1}\cdots q_{i-1} \bar{a}_{i-1} a_{i-1}
 \otimes_{r} e_{i+n-2r+1}
 - (-1)^{2n+2r} \bar{a}_{i-1}\bar{a}_{i-2} \otimes_{r-1} e_{i+n-2r+1}
\end{align*}

The third term cancels with the 9th term, the 4th with the 13th,
the 7th with the 10th and the 8th with the 14th. We now apply the relations in $\lq$.
Using $a_ia_{i+1} =0 = \bar{a}_i\bar{a}_{i-1}$, we have that the
first, 6th, 11th and 16th terms are zero.
The $q_i a_i \bar{a}_{i} - \bar{a}_{i-1}a_{i-1}$ relations mean that the
2nd and 5th terms cancel, and the 12th and 15th terms cancel.
Thus the net sum is zero, and the result follows.
\end{proof}

The next theorem is now immediate from \cite[Theorem 2.1]{GHMS}.

\bt
With the above notation, $(P^*, \partial^*)$ is a minimal projective bimodule resolution of $\lq$.
\et

\bigskip

\section{The Hochschild cohomology ring of $\lq$}

\bigskip

We consider the complex ${\Hom}_{\lq^e}(P^n, \lq)$.
All our homomorphisms are $\lq^e$-homomorphisms and so we
write ${\Hom}(-,-)$ for ${\Hom}_{\lq^e}(-,-)$. We start by computing the dimension of the space
$\Hom(P^n, \lq)$ for each $n \pgq 0$. For $m \pgq 3$, we write $n=pm+t$ where $p\pgq 0$ and $0\ppq t\ppq
m-1$.

\bl\label{lem:dim} Suppose $m \pgq 3$ and $n=pm+t$ where $p\pgq 0$ and $0\ppq t\ppq
m-1$. Then
$$\dim_K\Hom(P^n, \lq) =
\left\{
\begin{array}{ll}
(4p+2)m & \mbox{if } t\neq m-1 \\
(4p+4)m & \mbox{if } t=m-1.
\end{array}
\right.
$$
If $m = 1$ or $m = 2$ then $$\dim_K\Hom(P^n, \lq) = 4(n+1).$$
\el

The proof is as for the non-deformed case (with $q_0 = q_1 = \cdots = q_{m-1} = 1$)
in \cite[Lemma 1.7]{ST} and where $N = 1$, and so is omitted.

\bigskip

Applying $\Hom(-, \lq)$ to the resolution $(P^*,\partial^*)$ gives the complex
$(\Hom(P^n, \lq), d^n)$ where $d^n : \Hom(P^n, \lq) \to \Hom(P^{n+1}, \lq)$ is induced by the map
$\partial^{n+1} : P^{n+1} \to P^n$. The $n$th Hochschild cohomology group $\HH^n(\lq)$ is then given by
$\HH^n(\lq) = \ker d^n/\im d^{n-1}$. We start by calculating the dimensions of $\ker d^n$ and $\im d^{n-1}$. We consider the cases $m \pgq 3$ and $m=2$ separately, and recall that the Hochschild cohomology of $\lq$ in
the case $m=1$ was fully determined in \cite{BGMS}.

\bigskip

We keep to the notational conventions of \cite{ST}. So far, we have simplified notation by denoting the
idempotent $\mathfrak{o}(g^n_{r,i}) \otimes \mathfrak{t}(g^n_{r,i})$
of the summand $\lq\mathfrak{o}(g^n_{r,i}) \otimes
\mathfrak{t}(g^n_{r,i})\lq$ of $P^n$ uniquely by $e_i\ot_r
e_{i+n-2r}$ where $0 \ppq i \ppq m-1$. However, even this notation
with subscripts under the tensor product symbol becomes cumbersome
in computations. Thus we now recall the additional conventions of \cite[1.3]{ST}
which we keep throughout the rest of the paper. Specifically, since
$e_{i+n-2r} \in \{e_0, e_1, \ldots , e_{m-1}\}$, it would be
usual to reduce the subscript $i+n-2r$ modulo $m$. However, to make
it explicitly clear to which summand of the projective module $P^n$
we are referring and thus to avoid confusion, whenever we write
$e_i\otimes e_{i+k}$ for an element of $P^n$, we will always have $i
\in \{0, 1, \ldots , m-1\}$ and consider $i+k$ as an element of
${\mathbb Z}$, in that $r = (n-k)/2$ and $e_i\otimes e_{i+k} =
e_i\otimes_{\frac{n-k}{2}} e_{i+k}$ and thus lies in the
$\frac{n-k}{2}$-th summand of $P^n$. We do not reduce $i+k$ modulo
$m$ in any of our computations. In this way, when considering
elements in $P^n$, our element $e_i\otimes e_{i+k}$ corresponds
uniquely to the idempotent $\mathfrak{o}(g^n_{r,i}) \otimes
\mathfrak{t}(g^n_{r,i})$ of $P^n$ with $r = (n-k)/2$, for
each $i=0, 1, \ldots , m-1$.

With this notation and for future reference, we note that an element $f \in \Hom(P^n, \lq)$
is determined by its image on each $e_i \otimes e_j$ that generates a summand of
$P^n$. Now $f (e_i \otimes e_j) \in e_i \lq e_j$ and hence can only be
non-zero if $i=j$ or if $i = j \pm 1$. For $m \pgq 3$ and $f \in \Hom(P^{n}, \lq)$ we may write:
$$
\begin{cases}
f(e_i\ot e_{i+\alpha m})=\sigma_{i}^{\alpha}e_i + \tau_{i}^{\alpha}\oa_{i-1}a_{i-1},\\
f(e_i\ot e_{i+\beta m-1})=\lambda_{i}^{\beta}\oa _{i-1},\\
f(e_i\ot e_{i+\gamma m+1})=\mu_{i}^{\gamma}a_i, \end{cases}
$$
with coefficients $\sigma_{i}^{\alpha},$ $\tau_{i}^{\alpha},$
$\lambda_{i}^\beta$ and $\mu_{i}^\gamma$ in $K$, and appropriate ranges of integers
$\alpha$, $\beta$ and $\gamma$. Specifically, for $\lq e_i\otimes e_{i+\alpha m}\lq$ to be a summand of $P^n$,
we require $i+\alpha m = i+n-2r$ for some $0 \ppq r \ppq n$. Similarly we require
$i+\beta m -1 =i+n-2r$ and $i+\gamma m +1 =i+n-2r$ for some $0 \ppq r \ppq n$.
The precise ranges of $\alpha$, $\beta$ and $\gamma$ for the case $m \pgq 3$ are as follows.
(We have four cases based on the parity of $t$ and of $m$, where $n=pm+t$ with $0 \ppq t \ppq m-1$.)

If both $t$ and $m$ are even, then we only need $\alpha$.
We have $2p+1$ values of $\alpha$
with $-p \ppq \alpha \ppq p$.

If $t$ is even and $m$ is odd, then we have
$p+1$ values of $\alpha$
with $-p \ppq \alpha \ppq p$ and $\alpha \equiv p \mod 2$.
For $t \ppq m-2$
we also have
$p$ values of $\beta$ and $\gamma$
with $-p+1 \ppq \beta \ppq p-1$,
$-p+1 \ppq \gamma \ppq p-1$
 and $\beta \equiv \gamma \equiv p +1 \mod 2$.
If $t = m-1$ then we get $p+1$ values of $\beta$ and $\gamma$
with $-p+1 \ppq \beta \ppq p+1$,
$-p-1 \ppq \gamma \ppq p-1$
 and $\beta \equiv \gamma \equiv p +1 \mod 2$.

If $t$ is odd and $m$ is even, then we have
no values for $\alpha$.
For $t \ppq m-2$
we have
$2p+1$ values of $\beta$ and $\gamma$
with $-p \ppq \beta \ppq p$ and
$-p \ppq \gamma \ppq p$.
If $t = m-1$ then we get $2p+2$ values of $\beta$ and $\gamma$
with $-p \ppq \beta \ppq p+1$ and
$-p-1 \ppq \gamma \ppq p$.

If $t$ is odd and $m$ is odd, then we have
$p$ values of $\alpha$
with $-p+1 \ppq \alpha \ppq p-1$ and $\alpha \equiv p+1 \mod 2$.
We also have
$p+1$ values of $\beta$ and $\gamma$
with $-p \ppq \beta \ppq p$,
$-p \ppq \gamma \ppq p$
 and $\beta \equiv \gamma \equiv p  \mod 2$.

\bigskip

We consider the case $m = 2$ in Section \ref{m=2} and suppose now that $m \pgq 3$.

\bigskip

\subsection{$\ker d^n$ where $m \pgq 3$.}

Let $f \in \Hom(P^n, \lq)$ and suppose $f \in \ker d^n$ so that
$d^n(f) = f \circ \partial^{n+1} \in \Hom(P^{n+1}, \lq)$. Write $n = pm + t$ with $0 \ppq t \ppq m-1$.
We evaluate $d^n(f)$ at $e_i\ot e_{i+n+1-2r}$ for $r = 0, \ldots , n+1$.
We have three separate cases for $r$ to consider.

We first consider $r=0$. Then, for each $i = 0, \ldots , m-1$ we have
\begin{align*}
d^n(f)(e_i \ot e_{i+n+1})& =
f\circ \partial^{n+1}(e_i \ot e_{i+n+1})\\
&=
f(e_i\ot e_{i+n}) a_{i+n}  + (-1)^{n+1} a_i f(e_{i+1}\ot e_{i+n+1}) \\
&=
\begin{cases}
\lambda^{p+1}_i\oa_{i-1} a_{i-1} -(-1)^n\lambda ^{p+1}_{i+1}a_i\oa_i
&\mbox{ if $t=m-1$}\\
\mu^{p}_ia_{i} a_{i+1} -(-1)^n\mu ^{p}_{i+1}a_ia_{i+1}
&\mbox{ if $t=1$}\\
\sigma^{p}_ia_{i} +\tau^p_i \oa_{i-1} a_{i-1}a_i -
(-1)^n(\sigma^{p}_{i+1}a_{i} +\tau^p_{i+1} a_i\oa_{i} a_{i})
&\mbox{ if $t=0$}\\
0
&\mbox{ otherwise.}
\end{cases}
\end{align*}
Applying the relations in $\lq$ gives:
\begin{align*}
d^n(f)(e_i \ot e_{i+n+1})&=
\begin{cases}
(q_i\lambda^{p+1}_i -(-1)^n\lambda ^{p+1}_{i+1})a_i\oa_i
&\mbox{ if $t=m-1$}\\
0
&\mbox{ if $t=1$}\\
(\sigma^{p}_i -(-1)^n \sigma^{p}_{i+1})a_{i}
&\mbox{ if $t=0$}\\
0
&\mbox{ otherwise.}
\end{cases}
\end{align*}

Thus if $f \in \ker d^n$ and $t = m-1$ this gives
the condition
\begin{align*}
\lambda ^{p+1}_{i+1}
&= (-1)^n q_i\lambda^{p+1}_i
= (-1)^{2n} q_iq_{i-1} \lambda^{p+1}_{i-1}
=(-1)^{3n} q_iq_{i-1}q_{i-2} \lambda^{p+1}_{i-2}\\
&= \cdots
=(-1)^{mn} q_iq_{i-1}q_{i-2}\cdots q_{i-m+1} \lambda^{p+1}_{i+1}
\end{align*}
and hence
$$\lambda^{p+1}_{i+1} = (-1)^{mn} \zeta \lambda^{p+1}_{i+1}.$$
So to get non-trivial solutions for $\lambda^{p+1}_{i+1}$ we need
$\zeta = (-1)^{mn}$.
But we assumed that $\zeta$ is not a root of unity and thus there are no
non-trivial solutions for $\lambda^{p+1}_{i+1}$, that is,
$\lambda^{p+1}_i =0$ for all $i$.

If $f \in \ker d^n$ and $t=0$ this gives
the condition
$$
\sigma ^{p}_{i+1}
= (-1)^n \sigma^{p}_i
= (-1)^{2n} \sigma^{p}_{i-1}
= \cdots = (-1)^{mn} \sigma^p_{i+1}
$$
and so to get non-trivial solutions for $\sigma^{p}_{i+1}$ we need
$$
(-1)^{mn}  =1.
$$
Now note that each $\sigma_i^p$ is determined by the others, so
we need only determine one of them, say $\sigma_0^p$.
Then we will have a free choice for $\sigma_0^p$ if $mn$ is even or
$\car K =2$, but $\sigma_0^p = 0$ (and hence $\sigma_i^p = 0$ for all $i$) if $mn$ is
odd and $\car K \ne 2$.

So if $r = 0$ then, for $f$ to be in $\ker d^n$, we have the conditions:
$$
\begin{cases}
\lambda_i^{p+1} =0 &\mbox{ if $t = m-1$}\\
\sigma_i^{p} = 0 & \mbox{ if $t = 0$ and  $(-1)^{mn} \ne 1$}\\
\sigma_i^{p} = (-1)^{in}\sigma_0^p & \mbox{ if $t=0$ and  $(-1)^{mn} = 1$}\\
\end{cases}
$$
for all $i = 0, \ldots , m-1$.

\bigskip

We next consider $ r = n +1$. Then
\begin{align*}
d^n(f)(e_i &\ot e_{i-n-1})=
f\circ \partial^{n+1}(e_i \ot e_{i-n-1})\\
&=
(-1)^{n+1}f( e_i\ot e_{i-n}) \oa_{i-n-1}  +  \oa_{i-1} f(e_{i-1}\ot e_{i-n-1}) \\
&=
\begin{cases}
-(-1)^n\mu^{-p-1}_ia_{i} \oa_{i} +\mu ^{-p-1}_{i-1}\oa_{i-1}a_{i-1}
&\mbox{ if $t = m-1$}\\
-(-1)^n\lambda^{-p}_i\oa_{i-1} \oa_{i-2} +
\lambda^{-p}_{i-1}\oa_{i-1}\oa_{i-2}
&\mbox{ if $t=1$}\\
-(-1)^n(\sigma^{-p}_i\oa_{i-1} +\tau^{-p}_i \oa_{i-1} a_{i-1}\oa_{i-1})
+\sigma^{-p}_{i-1}\oa_{i-1} +\tau^{-p}_{i-1} \oa_{i-1}\oa_{i-2} a_{i-2}
&\mbox{ if $t=0$}\\
0
&\mbox{ otherwise.}
\end{cases}
\end{align*}
Applying the relations in $\lq$ gives:
$$
d^n(f)(e_i \ot e_{i-n-1})
=\begin{cases}
(q_i\mu ^{-p-1}_{i-1}-(-1)^n\mu^{-p-1}_i )a_{i}\oa_{i}
&\mbox{ if $t=m-1$}\\
0
&\mbox{ if $t=1$}\\
(\sigma^{-p}_{i-1}-(-1)^n\sigma^{-p}_i )\oa_{i-1}
&\mbox{ if $t=0$}\\
0
&\mbox{ otherwise.}
\end{cases}
$$

If $t=0$ then we will get that the $\sigma^{-p}_i$ are
all dependent on $\sigma^{-p}_0$, and they will be all zero if $mn$ is odd
and $\car K \ne 2$. If $t=m-1$ then all the $\mu^{-p-1}_i$ are zero.

So if $r = n+1$ then, for $f$ to be in $\ker d^n$, we have the conditions:
$$
\begin{cases}
\mu_i^{-p-1} =0 &\mbox{ if $t=m-1$}\\
\sigma_i^{-p} =0 & \mbox{ if $t=0$ and $(-1)^{mn} \ne 1$}\\
\sigma_i^{-p} = (-1)^{in}\sigma_0^{-p} &\mbox{ if $t=0$ and  $(-1)^{mn} = 1$.}\\
\end{cases}
$$
for all $i = 0, \ldots , m-1$.

We now do the generic case for $r$ with $1 \ppq r \ppq n$. We have
\begin{align*}
&d^n(f)(e_i \ot e_{i+n+1-2r})=
f\circ \partial^{n+1}(e_i \ot e_{i+n+1-2r})\\
&=
f(e_i\otimes_re_{i+n-2r})a_{i+n-2r}
-(-1)^nq_{i-r+1}q_{i-r+2}\cdots q_{i+n-2r+1}
f(e_i\otimes_{r-1}e_{i+n-2r+2})\bar{a}_{i+n-2r+1}\\
& \qquad -(-1)^n((-1)^r
q_{i-r+1}q_{i-r+2}\cdots q_ia_i
f(e_{i+1}\ot_r e_{i+n-2r+1}) +
(-1)^r\bar{a}_{i-1}f(e_{i-1}\ot_{r-1} e_{i+n-2r+1}))\\
&=
\begin{cases}
\sigma^{\alpha}_ia_{i}
 -(-1)^{n+r}q_{i-r+1}q_{i-r+2}\cdots q_i
\sigma^{\alpha}_{i+1}a_{i}
&\mbox{ if $n - 2r = \alpha m$}\\
\phantom{a}\\
(q_i\lambda^{\beta}_i
-(-1)^nq_{i-r+1}q_{i-r+2}\cdots q_{i+n-2r+1}
\mu^{\beta}_{i})a_{i}\oa_{i}\\
\qquad -(-1)^{n+r}
( q_{i-r+1}q_{i-r+2}\cdots q_i
\lambda^{\beta}_{i+1} + q_i\mu ^{\beta}_{i-1})a_i\oa_{i}
&\mbox{ if $n - 2r = \beta m -1$}\\
\phantom{a}\\
-(-1)^nq_{i-r+1}q_{i-r+2}\cdots q_{i+n-2r+1}
\sigma^{\alpha}_{i}\oa_{i-1}
 -(-1)^{n+r} \sigma^{\alpha}_{i-1}\oa_{i-1}
&\mbox{ if $n - 2r = \alpha m -2$}\\
\phantom{a}\\
0
&\mbox{ otherwise.}
\end{cases}
\end{align*}

For $n-2r = \alpha m$ we get
a similar situation to the $r=0$ and $t = m - 1$ case.
We write $r = bm +c$ with $b \in \Z$ and $0\ppq c \ppq m-1$.
We need:
\begin{align*}
\sigma^{\alpha}_i
&= (-1)^{n+r} \underbrace{q_{i-r+1}q_{i-r+2}\cdots q_i}_{\mbox{$r$ terms}}
\sigma^{\alpha}_{i+1}
= (-1)^{n+r} \underbrace{q_{i-r+1}q_{i-r+2}\cdots q_{i-r+c}}_{\mbox{$c$
    terms}} \zeta^b \sigma^{\alpha}_{i+1}\\
&= (-1)^{2n+2r} \underbrace{q_{i-r+1}q_{i-r+2}\cdots q_{i-r+c}}_{\mbox{$c$ terms}}
 \underbrace{q_{i-r+c+1}q_{i-r+c+2}\cdots q_{i-r+2c}}_{\mbox{$c$ terms}}
\zeta^{2b}\sigma^{\alpha}_{i+2}\\
& = \cdots
= (-1)^{mn+mr} \zeta^c
\zeta^{mb}\sigma^{\alpha}_{i}
= (-1)^{mn+mr} \zeta^{r}\sigma^{\alpha}_{i}.
\end{align*}
Thus either all $\sigma^\alpha_i$ are zero or $\zeta$ is a root of
unity. Hence (by assumption) $\sigma^\alpha_i = 0$ for all $i$ and all $\alpha$ with $n-2r = \alpha m$.

For $n-2r = \beta m-1$, the condition that $f$ is in $\ker d^n$ yields the $m$ equations
$$
(-1)^{n+r} \underbrace{q_{i-r+1}q_{i-r+2}\cdots q_i}_{\mbox{$r$ terms }} \lambda^{\beta}_{i+1}
=
q_i\lambda^{\beta}_i
-(-1)^n\underbrace{q_{i-r+1}q_{i-r+2}\cdots q_{i+n-2r+1}}_{\mbox{$n-r
    +1$ terms}}\mu^{\beta}_{i}
-(-1)^{n+r}q_i\mu ^{\beta}_{i-1}
$$
in the $2m$ variables $\lambda^{\beta}_i, \mu^{\beta}_i$ where $i = 0, \ldots , m-1$
(with the obvious conventions that $\lambda^{\beta}_0 = \lambda^{\beta}_{m-1}$ etc.).
We may rewrite these equations as
\begin{align*}
\lambda^{\beta}_{i+1} & =
(-1)^{n+r}(\underbrace{q_{i-r+1}q_{i-r+2}\cdots q_{i-1}}_{\mbox{$r-1$ terms}})^{-1}\lambda^{\beta}_i\\
& \qquad +(-1)^{r+1}(\underbrace{q_{i-r+1}q_{i-r+2}\cdots q_i}_{\mbox{$r$ terms}})^{-1}
\underbrace{q_{i-r+1}q_{i-r+2}\cdots q_{i+n-2r+1}}_{\mbox{$n-r+1$ terms}}\mu^{\beta}_{i}\\
& \qquad -(\underbrace{q_{i-r+1}q_{i-r+2}\cdots q_{i-1}}_{\mbox{$r-1$ terms}})^{-1}\mu ^{\beta}_{i-1}
\end{align*}
since the $q_i$ are invertible.
Thus we may write all the $\lambda^\beta_i$ in terms of $\lambda^\beta_0, \mu^{\beta}_0, \ldots , \mu^{\beta}_{m-1}$.
We may then write
$\lambda^\beta_0$ in terms of $\mu^{\beta}_0, \ldots , \mu^{\beta}_{m-1}$, provided that the coefficient of $\lambda^\beta_0$ is non-zero. Specifically, if $r \ne 1$, then the equations give
\begin{align*}
\lambda^{\beta}_0 & = (-1)^{(n+r)m}
(q_{m-r}\cdots q_{m-2})^{-1}(q_{m-r-1}\cdots q_{m-3})^{-1}\cdots (q_{m-r+1}\cdots q_{m-1})^{-1}
\lambda^{\beta}_0 \\
& \qquad + \mbox{ terms in $\mu^{\beta}_0, \ldots , \mu^{\beta}_{m-1}$}
\end{align*}
and hence
$$\lambda^{\beta}_0 = (-1)^{(n+r)m}(\zeta^{-1})^{r-1}\lambda^{\beta}_0 + \mbox{ terms in $\mu^{\beta}_0, \ldots , \mu^{\beta}_{m-1}$}.$$
Since $\zeta$ is not a root of unity, it follows that we may write $\lambda^\beta_0$ in terms of $\mu^{\beta}_0, \ldots , \mu^{\beta}_{m-1}$. On the other hand, suppose $r=1$. Here the original equations reduce to
$$\lambda^{\beta}_{i+1} = (-1)^{n+1}\lambda^{\beta}_i
 + \underbrace{q_{i+1}q_{i+2}\cdots q_{i+n-1}}_{\mbox{$n-1$ terms}}\mu^{\beta}_{i}
 -\mu ^{\beta}_{i-1}.
$$
If $n$ is even and $\car K \ne 2$ then we can again write $\lambda^\beta_0$ in terms of $\mu^{\beta}_0, \ldots , \mu^{\beta}_{m-1}$. However, if $n$ is odd or $\car K = 2$ then adding these equations together gives
$$\sum_{i=0}^{m-1}((q_{i+1}\cdots q_{i+n-1}) - 1) \mu^{\beta}_i = 0$$
so that there is a dependency among the $\mu^{\beta}_i$ but $\lambda^\beta_0$ is a free variable if $n \ne 1$. (If $n = 1$ then both sides are zero so there is no dependency.)

Finally, we consider the case where $n-2r = \alpha m - 2$. Here we have
the condition:
$$(-1)^nq_{i-r+1}q_{i-r+2}\cdots q_{i+n-2r+1}
\sigma^{\alpha}_{i}
= -(-1)^{n+r} \sigma^{\alpha}_{i-1}.$$
This is similar to the $n-2r = \alpha m$ case and we deduce that
all the $\sigma^\alpha_i$ are zero since $\zeta$ is not a root of unity.

Hence, if $1 \ppq r \ppq n$ and $f$ is in $\ker d^n$, we have:
$$
\begin{cases}
\sigma_i^{\alpha} =0 & \mbox{ if $n - 2r = \alpha m$ or if $n - 2r = \alpha m - 2$}\\
\dim\sp\{\lambda^{\beta}_0, \dots , \lambda^{\beta}_{m-1}, \mu^{\beta}_0, \dots, \mu^{\beta}_{m-1}\} = m &
\mbox{ if $n - 2r =  \beta m - 1$ and either $r \ne 1$ or $n \ne 1$}\\
\dim\sp\{\lambda^{\beta}_0, \dots , \lambda^{\beta}_{m-1}, \mu^{\beta}_0, \dots, \mu^{\beta}_{m-1}\} = m+1 & \mbox{ if $n - 2r =  \beta m - 1$, $r = 1$ and $n = 1$.}\\
\end{cases}
$$

\bigskip

We now combine this information to determine $\dim\ker d^n$.

\bprop
For $m \pgq 3$,
$$\dim \ker d^n
=
\begin{cases}
m+1
&\mbox{ if $n = 0$ or $n=1$}\\
(2p+1)m
&\mbox{ if $n \pgq 2$}.
\end{cases}
$$
\eprop

\begin{proof}
We first do the cases $n=0, 1$. If $n=0$ then $r=0, 1$ and $\alpha = 0$.
Moreover $(-1)^{mn} = 1$, so $\sigma_i^0 = \sigma_0^0$ for all $i$. Thus $\dim\ker d^0 = m+1$.
If $n=1$ then we have $r=0, 1, 2$ and so $n-2r = 1, -1, -3$ respectively. The only condition comes from
the $r=1$ case, where we have free variables $\lambda_0^0, \mu_0^0, \dots , \mu_{m-1}^0$. Thus $\dim\ker d^0 = m+1$.

For $n = pm+ t \pgq 2$ we consider the 4 cases depending on the parity of $t$ and of $m$.

Suppose both $t$ and $m$ are even.
Here we need only consider the possible values of $\sigma^\alpha_i$
and $\tau^\alpha_i$ with $-p\ppq \alpha \ppq p$. We have that all
$\sigma^\alpha_i$ are zero. (Note that if $t=0$ so $n = pm$ then the $r=1$
case where $n-2 = pm - 2$ shows that all the $\sigma^p_i$ are zero
and the $r=n$ case where $n-2n = -pm$ shows that all the $\sigma^{-p}_i$ are zero.) Hence the only
contribution to the kernel is from the $\tau^\alpha_i$
and thus $\dim\ker d^n = (2p+1)m$.

Suppose $t$ even and $m$ odd. If $t \neq m-1$,
we have $(p+1)m$ many $\sigma^\alpha_i$,
$(p+1)m$ many $\tau^\alpha_i$,
$pm$ many $\lambda_i^\beta$ and $pm$ many $\mu^\gamma_i$.
All the $\sigma_i^\alpha$ are zero and the $\tau_i^\alpha$ are free
as for the previous case giving a $(p+1)m$ dimensional contribution.
The dependence between the $\lambda_i^\beta$ and $\mu_i^\beta$ gives
another $pm$ dimensional contribution.
Thus $\dim\ker d^n = (2p+1)m$ if $t \ne m-1$. The case $t=m-1$  is similar, and we note that
$\lambda^{p+1}_i$ and $\mu^{-p-1}_i$ are all zero by the $r=0$ and $r=n+1$ cases respectively.
So $\dim\ker d^n = (2p+1)m$ if $t = m-1$.

Suppose $t$ odd and $m$ even. If $t \ppq m-2$ we have
$(2p+1)m$ many $\lambda_i^\beta$ and $(2p+1)m$ many $\mu^\gamma_i$.
Thus the dependence between the $\lambda_i^\beta$ and $\mu_i^\beta$ gives
a $(2p+1)m$ dimensional contribution.
If $t=m-1$ then we have $\lambda_i^{p+1} =0$ and $\mu_i^{-p-1} =0$ from the $r=0$ and $r=n+1$ cases so
we still get $(2p+1)m$ dimensions. Thus $\dim\ker d^n = (2p+1)m$.

Finally we consider the case where $t$ and $m$ are both odd.
We have $pm$ values of $\sigma_i^\alpha$, $pm$ values of $\tau^\alpha_i$,
$(p+1)m$ values of $\lambda^\beta_i$ and $(p+1)m$ values of $\mu^\gamma_i$.
Again, the dependence between the $\lambda_i^\beta$ and $\mu_i^\beta$ gives
a $(p+1)m$ dimensional contribution. The $\sigma_i^\alpha$ are all zero and
the $\tau^\alpha_i$ are free. Hence $\dim\ker d^n = (2p+1)m$.

This completes the proof.
\end{proof}

Using the rank-nullity theorem we now get the dimension of $\im d^{n-1}$.

\bprop For $m \pgq 3$ and $n=pm+t$ we have
$$\dim \im d^{n-1}
=
\begin{cases}
0
&\mbox{ if $n = 0$}\\
m-1
&\mbox{ if $n = 1$ or $n=2$} \\
(2p+1)m
&\mbox{ if $n \pgq 3$.}
\end{cases}
$$
\eprop

\begin{proof}
The cases $n = 0, 1, 2$ are immediate. For $n \pgq 3$, write $n = pm+t$ with $0 \ppq t \ppq m-1$.
If $t \ne 0$ then $\dim_K \Hom(P^{pm+t-1}, \lq) = (4p+2)m$ and $\dim \ker d^{pm+t-1} =
(2p+1)m$. If $t=0$ then $\dim_K \Hom(P^{pm-1}, \lq) = 4pm$ and $\dim \ker d^{pm-1} =
(2p-1)m$. Thus in both cases we have $\dim \im d^{n-1} =  (2p+1)m$.
\end{proof}

We come now to our main results where we determine the Hochschild cohomology ring of the algebra $\lq$ when
$\zeta$ is not a root of unity.

\bt\label{thm:dimHH} For $m \pgq 3$,
$$
\dim \HH^n(\lq)
=
\begin{cases}
m+1 & \mbox{if $n =0 $}\\
2  & \mbox{if $n =1 $}\\
1 & \mbox{if $n =2 $}\\
0 & \mbox{if $n \pgq 3$.}
\end{cases}
$$
Thus $\HH^*(\lq)$ is a finite-dimensional algebra of dimension $m+4$.
\et

\begin{thm}\label{thm:ring} For $m\pgq 3$, we have
$$\HH^*(\lq) \cong K[x_0, x_1, \ldots , x_{m-1}] / (x_ix_j) \times_K
\textstyle{\bigwedge} (u_1,u_2)
$$
where $\times_K$ denotes the fibre product over $K$, $\bigwedge(u_1,u_2)$ is the exterior algebra on the generators $u_1$ and $u_2$, the
$x_i$ are in degree $0$, and the $u_i$ are in degree 1.
\end{thm}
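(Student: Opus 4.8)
The plan is to determine $\HH^*(\lq)$ degree by degree, exhibit explicit generators, compute all their Yoneda products, and then read off the fibre product from the way the degree-$0$ part meets the positive-degree part. I work throughout with the resolution $(P^*,\partial^*)$ and the cochain complex $(\Hom(P^n,\lq),d^n)$, using the dimensions from Theorem~\ref{thm:dimHH}. I begin with the degree-$0$ factor: since $\HH^0(\lq)=Z(\lq)$, I would check that the centre contains the elements $1$ and $z_i:=a_i\bar a_i=q_i^{-1}\bar a_{i-1}a_{i-1}$, the socle element at vertex $i$. Each $z_i$ lies in $\rrad^2$ and commutes with every arrow, because the relevant products have length $3$ and $\rrad^3=0$; moreover $z_iz_j=0$ (orthogonal idempotents when $i\ne j$, and $z_i^2\in\rrad^4=0$). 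As $\{1,z_0,\dots,z_{m-1}\}$ are linearly independent and $\dim\HH^0(\lq)=m+1$, they form a basis, and $x_i\mapsto z_i$ yields an algebra isomorphism $\HH^0(\lq)\cong A:=K[x_0,\dots,x_{m-1}]/(x_ix_j)$.

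Next I would show that the augmentation ideal $\bar A=\langle z_0,\dots,z_{m-1}\rangle$ annihilates all positive-degree cohomology. Recall that $Z(\lq)=\HH^0$ acts on $\HH^n$ by $z\cdot[f]=[zf]$, where $(zf)(x)=z\,f(x)$. For $n\ge1$, the analysis of $\ker d^n$ above shows (precisely because $\zeta$ is not a root of unity) that every cocycle $f\colon P^n\to\lq$ has vanishing $\sigma$-coefficients, so its image lies in $\rrad\lq$. Hence $z_if$ has image in $z_i\rrad\subseteq\rrad^3=0$, so $z_i\cdot\eta=0$ for every $\eta\in\HH^{\ge1}(\lq)$. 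This is exactly the relation $x_iu_j=0$ required by the fibre product.

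It remains to identify $K\oplus\HH^1(\lq)\oplus\HH^2(\lq)$ with $\bigwedge(u_1,u_2)$. From the descriptions of $\ker d^1$, $\im d^0$ and $\ker d^2$ I would pick two cocycles $u_1,u_2$ spanning the two-dimensional $\HH^1(\lq)$ and one cocycle of the shape $e_i\ot e_i\mapsto\tau_iz_i$ spanning the one-dimensional $\HH^2(\lq)$. Graded-commutativity forces $u_iu_j=-u_ju_i$, so cup product restricts to an alternating form $\HH^1(\lq)\times\HH^1(\lq)\to\HH^2(\lq)\cong K$; on a two-dimensional space such a form is either identically zero or nondegenerate. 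The whole theorem therefore reduces to showing $u_1u_2\ne0$: if so, then after rescaling $u_1u_2$ generates $\HH^2(\lq)$ and $u_1^2=u_2^2=0$ (by anticommutativity when $\car K\ne2$, and by a separate check when $\car K=2$), whence $K\oplus\HH^1(\lq)\oplus\HH^2(\lq)\cong\bigwedge(u_1,u_2)$.

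The nonvanishing $u_1u_2\ne0$ is where the real work lies, and I expect it to be the main obstacle. To establish it I would lift the degree-$1$ cocycle $u_1$ to a chain map $\Phi_\bullet\colon P^{\bullet+1}\to P^\bullet$: first a bimodule map $\Phi_0\colon P^1\to P^0$ with $\partial^0\Phi_0=u_1$ (for a suitable representative, e.g.\ $e_i\ot e_{i-1}\mapsto e_i\ot\bar a_{i-1}$ and $e_i\ot e_{i+1}\mapsto0$), and then $\Phi_1\colon P^2\to P^1$ solving $\partial^1\Phi_1=\Phi_0\partial^2$, which is possible since $u_1\partial^2=0$. The product $u_1\cup u_2$ is represented by $u_2\circ\Phi_1\colon P^2\to\lq$; evaluating this on the generators $e_i\ot e_i$ and comparing with the chosen generator of $\HH^2(\lq)$ exhibits it as a nonzero multiple. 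The delicate part is keeping track of the signs and of the scalars $q_i$ introduced by $\partial^2$ and by the chain-map equation, but this is a finite verification. Assembling the three facts---$\HH^0(\lq)\cong A$ as algebras, $\bar A\cdot\HH^{\ge1}(\lq)=0$, and $K\oplus\HH^{\ge1}(\lq)\cong\bigwedge(u_1,u_2)$---identifies $\HH^*(\lq)$ with the graded algebra equal to $A$ in degree $0$ and to the augmentation ideal of $\bigwedge(u_1,u_2)$ in positive degrees, with $\bar A$ acting as zero, which is precisely the fibre product $A\times_K\bigwedge(u_1,u_2)$.
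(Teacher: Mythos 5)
Your proposal follows essentially the same route as the paper: identify $\HH^0(\lq)=Z(\lq)$ with $K[x_0,\dots,x_{m-1}]/(x_ix_j)$ via the socle elements $a_i\bar a_i$, note that every positive-degree cocycle takes values in $\rrad$ so the $x_i$ annihilate $\HH^{\pgq 1}(\lq)$, and reduce everything to showing $u_1u_2\ne 0$ (plus $u_1^2=u_2^2=0$ when $\car K=2$) by lifting a degree-one cocycle to a chain map and evaluating the composite on $P^2$. The one step you defer as ``a finite verification'' --- writing down the explicit liftings $\lift^0(u_2)$, $\lift^1(u_2)$ and checking that the resulting map $P^2\to\lq$, which sends $\mo(g^2_{1,0})\ot\mt(g^2_{1,0})$ to $\bar a_{m-1}a_{m-1}$ and all other generators to $0$, does not lie in $\im d^1$ --- is precisely the computation the paper carries out, and it confirms your prediction.
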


\begin{proof}
Since $\HH^0(\lq)$ is the centre $Z(\lq)$, it is clear that $\HH^0(\lq)$ has $K$-basis $\{1, x_0, \ldots , x_{m-1}\}$
where $x_i = a_i\bar{a}_i$. Thus $\HH^0(\lq) = K[x_0, x_1, \ldots , x_{m-1}] / (x_ix_j)$.

Define bimodule maps $u_1, u_2 : P^1 \to \lq$ by
$$\begin{array}{ll}
u_1 : &
\begin{cases}
\mo(g_{0, i}^1) \otimes \mt(g_{0, i}^1) & \mapsto \ \  a_i \mbox{ for all $i = 0, 1, \ldots , m-1$}\\
\mbox{else } & \mapsto \ \  0,
\end{cases}
\\
\phantom{a} &\\
u_2 : &
\begin{cases}
\mo(g_{0, m-1}^1) \otimes \mt(g_{0, m-1}^1) & \mapsto \ \  a_{m-1}\\
\mo(g_{1, 0}^1) \otimes \mt(g_{1, 0}^1) & \mapsto \ \  \bar{a}_{m-1}\\
\mbox{else } & \mapsto \ \  0.
\end{cases}
\end{array}
$$
It is straightforward to show that these maps are in $\ker d^1$ and that they represent
linearly independent elements in $\HH^1(\lq)$ which we also denote by $u_1$ and $u_2$.
Hence $\{u_1, u_2\}$ is a $K$-basis for $\HH^1(\lq)$.

In order to show that $u_1u_2$ represents a non-zero element of $\HH^2(\lq)$, we
define bimodule maps $\lift^0(u_2): P^1 \to P^0$ and $\lift^1(u_2): P^2 \to P^1$ by
$$\begin{array}{ll}
\lift^0(u_2) : &
\begin{cases}
\mo(g_{0, m-1}^1) \otimes \mt(g_{0, m-1}^1) & \mapsto \ \  a_{m-1}\otimes e_0\\
\mo(g_{1, 0}^1) \otimes \mt(g_{1, 0}^1) & \mapsto \ \ \bar{a}_{m-1}\otimes e_{m-1}\\
\mbox{else } & \mapsto \ \  0,
\end{cases}\\
\phantom{a} &\\
\lift^1(u_2) : &
\begin{cases}
\mo(g_{0, m-1}^2) \otimes \mt(g_{0, m-1}^2) & \mapsto \ \  a_{m-1}\mo(g_{0, 0}^1) \otimes \mt(g_{0, 0}^1)\\
\mo(g_{1, 0}^2) \otimes \mt(g_{1, 0}^2) & \mapsto \ \ \bar{a}_{m-1}\mo(g_{0, m-1}^1) \otimes \mt(g_{0, m-1}^1)\\
\mo(g_{1, m-1}^2) \otimes \mt(g_{1, m-1}^2) & \mapsto \ \  -q_{m-1}a_{m-1}\mo(g_{1, 0}^1) \otimes \mt(g_{1, 0}^1)\\
\mo(g_{2, 0}^2) \otimes \mt(g_{2, 0}^2) & \mapsto \ \  -\bar{a}_{m-1}\mo(g_{1, m-1}^1) \otimes \mt(g_{1, m-1}^1)\\
\mbox{else } & \mapsto \ \  0.
\end{cases}
\end{array}$$
Then the following diagram is commutative
$$\xymatrix{
P^2\ar[rr]^{\partial^2}\ar[d]_{\lift^1(u_2)} & & P^1\ar[d]_{\lift^0(u_2)}\ar[dr]^{u_2} & \\
P^1\ar[rr]^{\partial^1} & & P^0\ar[r] & \lq
}$$
where $P^0 \to \lq$ is the multiplication map.
Thus the element $u_1u_2 \in \HH^2(\lq)$ is represented by the map $u_1\circ\lift^1(u_2) : P^2 \to \lq$, that is, by the map
$$\begin{cases}
\mo(g_{1, 0}^2) \otimes \mt(g_{1, 0}^2) & \mapsto \ \  \bar{a}_{m-1}a_{m-1}\\
\mbox{else } & \mapsto \ \  0.
\end{cases}$$
Since this map is not in $\im d^1$, it follows that $u_1u_2$ is non-zero in $\HH^2(\lq)$ and hence $\HH^2(\lq) = \sp\{u_1u_2\}$.

From the lifting $\lift^1(u_2)$ it is easy to see that $u_2^2$ represents the zero element in $\HH^2(\lq)$, and a similar calculation shows that $u_1^2$ also represents the zero element in $\HH^2(\lq)$.
(Note that although it is immediate from the graded commutativity of $\HH^*(\lq)$ that
$u_1^2 = 0 = u_2^2$ in $\HH^2(\lq)$ when $\car K \neq 2$, this direct calculation is required when
$\car K = 2$.)

Thus we have elements $u_1$ and $u_2$ in $\HH^1(\lq)$ which are annihilated by all the $x_i \in \HH^0(\lq)$ and with
$u_1^2 = 0 = u_2^2$ and $u_1u_2 = - u_2u_1$ (with the latter by the graded-commutativity of $\HH^*(\lq)$). Thus
$$\HH^*(\lq) \cong K[x_0, x_1, \ldots , x_{m-1}] / (x_ix_j) \times_K
\bigwedge(u_1,u_2)$$
where $\times_K$ denotes the fibre product over $K$, $\bigwedge(u_1,u_2)$ is the exterior algebra
on the generators $u_1$ and $u_2$, the
$x_i$ are in degree $0$, and the $u_i$ are in degree 1.
\end{proof}

\bigskip

\section{The case $m=2$.}\label{m=2}

\bigskip

We assume that $m=2$ throughout this section.
Recall from Lemma \ref{lem:dim} that $\dim_K\Hom(P^n, \lq) = 4(n+1).$
For $f \in \Hom(P^{n}, \lq)$ we may write:
$$
\begin{cases}
f(e_i\ot e_{i+2\alpha })=\sigma_{i}^{\alpha}e_i +
\tau_{i}^{\alpha}\oa_{i-1}a_{i-1} &\mbox{ if $n$ even}\\
f(e_i\ot e_{i+2\beta +1})=\lambda_{i}^{\beta}\oa _{i-1} +
\mu_{i}^{\beta}a_i & \mbox{ if $n$ odd,}\end{cases}
$$
with coefficients $\sigma_{i}^{\alpha},$ $\tau_{i}^{\alpha},$
$\lambda_{i}^\beta$ and $\mu_{i}^\beta$ in $K$.
The choices of $\alpha$ and $\beta$ are:
$$
\begin{cases}
-p \le \alpha \le p  &\mbox{ if $n$ is even}\\
-p-1 \le \beta \le p  &\mbox{ if $n$ is odd,}
\end{cases}
$$
which gives $n+1$ values in each case.

\bigskip

\subsection{$\ker d^n$ where $m = 2$.}

Let $f \in \Hom(P^n, \lq)$ and suppose $f \in \ker d^n$ so that
$d^n(f) = f \circ \partial^{n+1} \in \Hom(P^{n+1}, \lq)$. Write $n =
2p + t$ with $t = 0, 1$.
We evaluate $d^n(f)$ at $e_i\ot e_{i+n+1-2r}$ for $r = 0, \ldots , n+1$.
There are three cases to consider.

We first consider $r=0$. Then, for $i = 0, 1$
and after applying the relations in $\lq$ we have:
\begin{align*}
d^n(f)(e_i \ot e_{i+n+1})
&=
\begin{cases}
(q_i\lambda^{p}_i + \lambda^{p}_{i+1})a_i\oa_i
&\mbox{ if $t=1$}\\
(\sigma^{p}_i -\sigma^{p}_{i+1})a_{i}
&\mbox{ if $t=0$.}
\end{cases}
\end{align*}
Thus if $f \in \ker d^n$ and $t = 1$ this gives
the condition
\begin{align*}
\lambda ^{p}_{i+1}
= - q_i\lambda^{p}_i
=  q_iq_{i-1} \lambda^{p}_{i+1}
=  \zeta \lambda^{p}_{i+1}
\end{align*}
and hence $\lambda^p_{1}= \lambda^p_0 = 0$ as $\zeta \ne 1$.
If $t=0$ this gives the condition
$\sigma ^{p}_{i+1}
=  \sigma^{p}_i$
and so there is a one-parameter solution here.

We next consider $r = n +1$. After applying the relations in $\lq$ we have:
$$
d^n(f)(e_i \ot e_{i-n-1})=
\begin{cases}
(q_i\mu ^{-p-1}_{i-1}+\mu^{-p-1}_i )a_{i}\oa_{i}
&\mbox{ if $t=1$}\\
(\sigma^{-p}_{i-1}-\sigma^{-p}_i )\oa_{i-1}
&\mbox{ if $t=0$.}
\end{cases}
$$
If $t=0$ then we get a one-parameter solution for the
$\sigma^{-p}_i$ as before. If $t=1$ then we have $\mu^{-p-1}_0 = \mu^{-p-1}_{1} =0$.

We now do the generic case for $r$ with $1 \ppq r \ppq n$. We have $d^n(f)(e_i \ot e_{i+n+1-2r})=$
$$\begin{cases}
(q_i \lambda^{p-2\epsilon}_i + \zeta^{p-\epsilon +1}\mu^{p-2\epsilon+1}_{i}
+\zeta^\epsilon \lambda^{p-2\epsilon}_{i+1} +
q_i\mu^{p-2\epsilon+1}_{i+1} )a_i\oa_{i} &
\mbox{ if $n$ odd and $r=2\epsilon$}\\
q_i( \lambda^{p-2\epsilon-1}_i + \zeta^{p-\epsilon }\mu^{p-2\epsilon}_{i}
-\zeta^\epsilon \lambda^{p-2\epsilon-1}_{i+1} -
\mu^{p-2\epsilon}_{i+1} )a_i\oa_{i} &
\mbox{ if $n$ odd and $r=2\epsilon+1$}\\
(\sigma^{p-2\epsilon}_i - \zeta^{\epsilon} \sigma^{p-2\epsilon}_{i+1})a_{i}
-(q_{i+1}\zeta^{p-\epsilon}\sigma^{p-2\epsilon+1}_i -
\sigma^{p-2\epsilon+1}_{i+1})\oa_{i+1} &
\mbox{ if $n$ even and $r = 2\epsilon$}\\
(\sigma^{p-2\epsilon-1}_i - q_i\zeta^{\epsilon} \sigma^{p-2\epsilon-1}_{i+1})a_{i}
+(-\zeta^{p-\epsilon}\sigma^{p-2\epsilon}_i +
\sigma^{p-2\epsilon}_{i+1})\oa_{i+1} &
\mbox{ if $n$ even and $r = 2\epsilon+1$.}
\end{cases}
$$

If $n$ is even, then
all the $\sigma^\alpha_i$'s are zero and there is no condition
on the $\tau^\alpha_i$'s, as for the $m\pgq 3$ case.

Now suppose $n$ is odd.
If we fix $n$ odd and $r$ even with $1 \ppq r \ppq n$ (so
that $n \pgq 3$) we get a pair of equations:
\begin{align*}
q_i \lambda^{p-2\epsilon}_i + \zeta^{p-\epsilon +1}\mu^{p-2\epsilon+1}_{i}
+\zeta^\epsilon \lambda^{p-2\epsilon}_{i+1} +
q_i\mu^{p-2\epsilon+1}_{i+1}  &= 0\\
\zeta^\epsilon \lambda^{p-2\epsilon}_{i} + q_{i+1}\mu^{p-2\epsilon+1}_{i}
+q_{i+1} \lambda^{p-2\epsilon}_{i+1} + \zeta^{p-\epsilon -1}\mu^{p-2\epsilon+1}_{i+1}
&= 0.
\end{align*}
These equations have a two-parameter solution if and only if
$q_iq_{i+1} -\zeta^{p+1}=
\zeta(1 -\zeta^{p} )\ne 0.$
Since $p \pgq 1$, this is non-zero, and we have a two-parameter
solution.

If we fix $n$ odd and $r$ odd with $1 \ppq r\ppq n$
we get a pair of equations:
\begin{align*}
 \lambda^{p-2\epsilon-1}_i + \zeta^{p-\epsilon }\mu^{p-2\epsilon}_{i}
-\zeta^\epsilon \lambda^{p-2\epsilon-1}_{i+1} -
\mu^{p-2\epsilon}_{i+1} &=0\\
-\zeta^\epsilon \lambda^{p-2\epsilon-1}_{i} -
\mu^{p-2\epsilon}_{i}
 +\lambda^{p-2\epsilon-1}_{i+1} + \zeta^{p-\epsilon }\mu^{p-2\epsilon}_{{i+1}}
&=0.
\end{align*}
These equations have a two-parameter solution if and only if
$-1 +\zeta^{p}\ne 0.$
For $n \pgq 3$ we have $p \pgq 1$, so this is non-zero, and we have a two-parameter
solution.
If however $n=1$ then necessarily $r=n=1$, and we get
\begin{align*}
 \lambda^{-1}_i + \mu^{0}_{i}
- \lambda^{-1}_{i+1} -
\mu^{0}_{i+1} &=0
\end{align*}
which has a three-parameter solution.


\bigskip

We may now determine the dimension of
$\ker d^n$ for $m=2$.

\bprop
For $m = 2$ and $n=2p+t$ with $t=0,1$, we have
$$\dim \ker d^n
=
\begin{cases}
3
&\mbox{ if $n = 0$ or $n=1$}\\
2(2p+1)
&\mbox{ if $n \pgq 2$,}
\end{cases}
$$
and
$$\dim \im d^n
=
\begin{cases}
1
&\mbox{ if $n = 0$}\\
5
&\mbox{ if $n = 1$}\\
2(2p+3)
&\mbox{ if $n \pgq 2$ and $n$ odd}\\
2(2p+1)
&\mbox{ if $n \pgq 2$ and $n$ even}.
\end{cases}
$$
\eprop

\begin{proof}
We first do the cases with small values of $n$. If $n=0$
then $r=0, 1$ and $\alpha =0$.
Thus we get $\dim\ker d^0 = 3$, corresponding to the free variables
$\sigma^0_0$, $\tau^0_0$ and $\tau^0_1$, and hence the image is one-dimensional.
If $n=1$, we have $r=0, 1, 2$ and $n-2r = 1, -1, -3$ respectively.
We need to determine $\mu^0_i$, $\mu^{-1}_i$, $\lambda_i^0$ and
$\lambda_i^{-1}$ ($8$ variables in total).
The $r=0$ case gives
$\lambda^0_i =0$, and the $r=2$ case gives $\mu^{-1}_i=0$.
The $n=r=1$ case has a three-parameter solution for $\lambda_i^{-1}$ in terms of
$\mu^{0}_{i}$,  $ \mu^{0}_{{i+1}}$ and
$\lambda^{-1}_{i+1}$.
Thus overall we get $\dim\ker d^1 = 3$ and $\dim\im d^1 = 5$.

We next consider the case $n=2p$; we need only consider the possible values of $\sigma^\alpha_i$
and $\tau^\alpha_i$ with $-p\ppq \alpha \ppq p$.
Here we get that all $\sigma^\alpha_i$ are zero. So the only
contribution to the kernel is from the $\tau^\alpha_i $
and thus the kernel has dimension $2(2p+1)$.
The dimension of the image
is thus $4(2p+1) - 2(2p+1) = 2(2p+1)$.

Finally, we consider $n=2p +1$.
Here we have
$2(2p+2)$ many $\lambda_i^\beta$ and $2(2p+2)$ many $\mu^\beta_i$
with $-p-1 \ppq \beta \ppq p$.
All the $\lambda_i^\beta$ are dependent on the $\mu^{\beta+1}_i$ if
$-p-1\ppq \beta \ppq p-1$, so we
get a $2(2p+1)$ dimensional contribution here.
Moreover $\lambda^p_0=0=\lambda^p_1$ and
$\mu_0^{-p-1} =0=\mu^{-p-1}_1$. Hence $\dim\ker d^n = 2(2p+1)$.
This gives $\dim\im d^n = 4(2p+2) - 2(2p+1) = 2(2p+3)$.
\end{proof}

Noting that $\dim\HH^0(\lq) = 3 = m+1$, we combine these results with Theorem \ref{thm:dimHH} to give the following theorem.

\bt For $m \pgq 2$,
$$
\dim \HH^n(\lq)
=
\begin{cases}
m+1 & \mbox{if $n =0 $}\\
2  & \mbox{if $n =1 $}\\
1 & \mbox{if $n =2 $}\\
0 & \mbox{if $n \pgq 3$.}
\end{cases}
$$
Thus $\HH^*(\lq)$ is a finite-dimensional algebra of dimension m+4.
\et

It can be verified directly that the proof of Theorem \ref{thm:ring} also holds when $m=2$. Hence we have the following result which describes the ring structure of $\HH^*(\lq)$ when $m=2$ and $\zeta$ is not a root of unity.

\begin{thm} For $m=2$, we have
$$\HH^*(\lq) \cong K[x_0, x_1] / (x_ix_j) \times_K
\textstyle{\bigwedge} (u_1,u_2)
$$
where $\times_K$ denotes the fibre product over $K$, $\bigwedge(u_1,u_2)$ is the exterior algebra on the generators $u_1$ and $u_2$, and the elements $x_0, x_1$ are in degree $0$ and $u_1, u_2$ in degree 1.
\end{thm}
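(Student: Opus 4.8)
The plan is to follow the proof of Theorem \ref{thm:ring} line by line, setting $m = 2$ and checking that no construction degenerates. The preceding proposition together with Theorem \ref{thm:dimHH} already gives $\dim\HH^0(\lq) = 3$, $\dim\HH^1(\lq) = 2$, $\dim\HH^2(\lq) = 1$ and $\HH^n(\lq) = 0$ for $n \pgq 3$, so it is enough to exhibit explicit generators in degrees $0$, $1$, $2$ and to compute all products between them; the vanishing in degrees $\pgq 3$ then disposes of every remaining relation automatically.

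First I would identify $\HH^0(\lq) = Z(\lq)$. As in the general case a $K$-basis is $\{1, x_0, x_1\}$ with $x_i = a_i\bar{a}_i$, and every product $x_ix_j$ vanishes in $\lq$, whence $\HH^0(\lq) \cong K[x_0,x_1]/(x_ix_j)$. Next I would define the degree-one maps $u_1, u_2 \colon P^1 \to \lq$ by precisely the formulae of Theorem \ref{thm:ring} with $m - 1 = 1$: thus $u_1$ sends each $\mo(g^1_{0,i}) \otimes \mt(g^1_{0,i}) \mapsto a_i$ and vanishes elsewhere, while $u_2$ sends $\mo(g^1_{0,1}) \otimes \mt(g^1_{0,1}) \mapsto a_1$ and $\mo(g^1_{1,0}) \otimes \mt(g^1_{1,0}) \mapsto \bar{a}_1$ and vanishes elsewhere. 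I would check $u_1, u_2 \in \ker d^1$ by composing with $\partial^2$ and reducing modulo the relations of $\lq$, and check that they are linearly independent modulo $\im d^0$; since $\dim\HH^1(\lq) = 2$ they then form a basis.

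The crux is the Yoneda product $u_1u_2 \in \HH^2(\lq)$. I would build the liftings $\lift^0(u_2)\colon P^1 \to P^0$ and $\lift^1(u_2)\colon P^2 \to P^1$ from the same formulae used for $m \pgq 3$, now with $m = 2$, and verify that the square analogous to the diagram in Theorem \ref{thm:ring} commutes. Reading off $u_1u_2 = u_1 \circ \lift^1(u_2)$ then yields the cocycle sending $\mo(g^2_{1,0}) \otimes \mt(g^2_{1,0}) \mapsto \bar{a}_1 a_1$ and vanishing elsewhere; this value is nonzero (indeed $\bar{a}_1 a_1 = q_0 a_0\bar{a}_0$), and I would check it is not a coboundary, so that $u_1u_2$ spans the one-dimensional $\HH^2(\lq)$. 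From the liftings one also reads off $u_2^2 = 0$, and a parallel computation gives $u_1^2 = 0$; this direct verification is needed rather than invoking graded-commutativity precisely when $\car K = 2$, while $u_1u_2 = -u_2u_1$ follows from graded-commutativity.

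It remains to record that every product $x_iu_j$ vanishes: each is represented by a map whose values are paths of length three, which are zero because $\lq$ is radical cube zero, and $x_i(u_1u_2) \in \HH^3(\lq) = 0$. Collecting $\HH^0(\lq) = K[x_0,x_1]/(x_ix_j)$, $\HH^1(\lq) = \sp\{u_1,u_2\}$ and $\HH^2(\lq) = \sp\{u_1u_2\}$ together with $u_1^2 = u_2^2 = 0$ and the trivial cross-products between the $x_i$ and the $u_j$ then exhibits $\HH^*(\lq)$ as the fibre product $K[x_0,x_1]/(x_ix_j) \times_K \bigwedge(u_1,u_2)$. The main obstacle is not conceptual but bookkeeping: for $m = 2$ the vertex indices wrap so tightly that several summands of each $P^n$ which are distinct when $m \pgq 3$ now coincide after reduction modulo $m$, so the real work is to confirm that $u_2$, $\lift^0(u_2)$ and $\lift^1(u_2)$ remain well defined and that no two of their component images collide or cancel unexpectedly, leaving the single surviving value $\bar{a}_1 a_1$ in the product.
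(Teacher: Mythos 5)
Your proposal is correct and matches the paper's approach exactly: the paper's entire proof for $m=2$ is the remark that the argument of Theorem \ref{thm:ring} can be verified directly to carry over, and your line-by-line specialization (with $a_{m-1}=a_1$, $\bar a_{m-1}=\bar a_1$, and the surviving cocycle $\mo(g^2_{1,0})\otimes\mt(g^2_{1,0})\mapsto \bar a_1 a_1 = q_0 a_0\bar a_0$) is precisely that verification. Your attention to the possible collision of summands when indices wrap modulo $2$ is the right point to worry about, and it does not in fact cause any degeneration.
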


We end by remarking that we have exhibited self-injective algebras whose Hochschild cohomology ring is of arbitrarily large, but nevertheless finite, dimension. The case $m=1$ was studied in \cite{BGMS} where it was shown that the Hochschild cohomology ring is 5-dimensional when $\zeta$ is not a root of unity. Thus, for all $m \pgq 1$, we now have self-injective algebras whose Hochschild cohomology ring is $(m+4)$-dimensional. Hence, for each $N \pgq 5$ we have an algebra with $N-4$ simple modules, of dimension $4(N-4)$ and with infinite global dimension whose Hochschild cohomology ring is $N$-dimensional.


\begin{thebibliography}{99}

\bibitem{BE} Bergh, P.A. and Erdmann, K., {\em Homology and cohomology of quantum
complete intersections}, Algebra Number Theory {\bf 2} (2008), 501-522.

\bibitem{BGMS} Buchweitz, R.-O., Green, E.L., Madsen, D. and
Solberg, \O., {\em Finite Hochschild cohomology without finite
global dimension}, Math. Res. Lett. {\bf 12} (2005), 805-816.

\bibitem{CK} Chin, W. and Krop, L., {\em Representation theory of liftings of
quantum planes,} arXiv:0712.1078.

\bibitem{EGST} Erdmann, K., Green, E.L., Snashall, N. and Taillefer,
R., {\em Representation theory of the Drinfeld doubles of a family
of Hopf algebras}, J. Pure Appl. Algebra {\bf 204} (2006), 413-454.

\bibitem{EHSST} Erdmann, K., Holloway, M., Snashall, N., Solberg, \O. and
Taillefer, R., {\em Support varieties for selfinjective algebras},
$K$-Theory {\bf 33} (2004), 67-87.

\bibitem{ES} Erdmann, K. and Solberg, \O., {\em Radical cube zero weakly
symmetric algebras and support varieties}, J. Pure Appl. Algebra {\bf 215} (2011), 185-200.

\bibitem{GHMS} Green, E.L., Hartman, G., Marcos, E.N. and Solberg, \O.,
{\em Resolutions over Koszul algebras}, Archiv der Math. {\bf 85}
(2005), 118-127.

\bibitem{GSZ} Green, E.L., Solberg, \O. and Zacharia, D., {\em Minimal
projective resolutions}, Trans. Amer. Math. Soc. {\bf 353} (2001),
2915-2939.

\bibitem{H} Happel, D., {\em Hochschild cohomology of finite-dimensional
algebras}, Springer Lecture Notes in Mathematics {\bf 1404} (1989),
108-126.

\bibitem{Patra} Patra, M.K., {\em On the structure of nonsemisimple
Hopf algebras}, J. Phys. A Math. Gen. {\bf 32} (1999), 159-166.

\bibitem{SS} Snashall, N. and Solberg, \O., {\em Support varieties and
Hochschild cohomology rings}, Proc.\ London Math.\ Soc.\ (3) {\bf 88}
(2004), 705-732.

\bibitem{ST} Snashall, N.\ and Taillefer, R., {\em The Hochschild cohomology
ring of a class of special biserial algebras}, J. Algebra Appl. {\bf 9} (2010), 73-122.

\bibitem{ST2} Snashall, N.\ and Taillefer, R., {\em Hochschild
  cohomology of socle deformations of a class of Koszul self-injective
  algebras}, Colloquium Mathematicum {\bf 119} (2010), 79-93.

\bibitem{Suter} Suter, R., {\em Modules for
$\mathfrak{U}_q(\mathfrak{sl}_2)$}, Comm. Math. Phys. {\bf 163}
(1994), 359-393.

\bibitem{Xiao} Xiao, J., {\em Finite-dimensional representations of
$U_t(sl(2))$ at roots of unity}, Can. J. Math. {\bf 49} (1997),
772-787.

\end{thebibliography}
\end{document}